\newcommand{\lapl}[1]
{\Delta #1}
\newcommand{\doo}[2]
{\frac{\partial #1}{\partial #2}}
\newtheorem{theorem}{Theorem}
\newtheorem{remark}[theorem]{Remark}
\begin{document}

\title[Robust Output Tracking for a Room Temperature Model]{Robust Output Tracking for a Room Temperature Model with Distributed Control and Observation}

\thispagestyle{plain}

\author{Konsta Huhtala, Lassi Paunonen and Weiwei Hu}
\address{(K. Huhtala and L. Paunonen) Mathematics and Statistic, Faculty of Information Technology and Communication Sciences, Tampere University, PO.\ Box 692, 33101 Tampere, Finland}
\email{konsta.huhtala@tuni.fi, lassi.paunonen@tuni.fi}
\address{(W. Hu) Department of Mathematics, University of Georgia, Athens, GA 30602, USA}
\email{Weiwei.Hu@uga.edu}

\begin{abstract}
	We consider robust output regulation of a partial differential equation model describing temperature evolution in a room. More precisely, we examine a two-dimensional room model with the velocity field and temperature evolution governed by the incompressible steady state Navier-Stokes and advection-diffusion equations, respectively, which coupled together form a simplification of the Boussinesq equations. We assume that the control and observation operators of our system are distributed, whereas the disturbance acts on a part of the boundary of the system. We solve the robust output regulation problem using a finite-dimensional low-order controller, which is constructed using model reduction on a finite element approximation of the model. Through numerical simulations, we compare performance of the reduced-order controller to that of the controller without model reduction as well as to performance of a low-gain robust controller.
\end{abstract}

\subjclass[2010]{%
93C05, 
93B52, 
35K40
}
\keywords{Linear control systems, robust control, output regulation, partial differential
	equations \newline The research was supported by the Academy of Finland Grant
	number 310489 held by L. Paunonen. L. Paunonen was funded by
	the Academy of Finland Grant number 298182. W. Hu was partially
	supported by the NSF grant DMS-1813570.} 

\maketitle

\section{INTRODUCTION}

We consider a thermal control problem for a two-dimensional room model. The partial differential equation (PDE) model considered consists of an advection--diffusion equation with the advection field governed by the steady state incompressible Navier--Stokes equations. This is a simplification, with one-way coupling between the PDEs, of the Boussinesq equations with two-way coupling between the advection field and the temperature.

Owing to the fact that these types of models are often employed to study thermal regulation and energy efficiency of buildings, modeling as well as theoretical approaches with both of the aforementioned setups are an active area of research. Feedback stabilization of both the Boussineq and the Navier--Stokes equations using boundary control has been studied by a variety of researchers, see for example \cite{Badra2012,Nguyen2015,Burns2016,Hu2016,He2018,Ramaswamy2019}, and several regulation examples for both PDE setups are presented in \cite{Aulisa2016}.

The goal of \emph{robust output tracking} is to have the output of the system converge to some desired reference trajectory despite disturbance signals or system perturbations. 
Achieving robust output tracking can be guaranteed by solving the \emph{robust output regulation problem} relying on the \emph{internal model principle}
\cite{Francis1975,Francis1976,Davison1976}. First developed in the 1970s for finite-dimensional systems, the principle has since been formulated also for infinite-dimensional systems, see \cite{Rebarber2003,Hamalainen2010,Paunonen2010} with recent results \cite{Paunonen2014,Humaloja2019} considering \emph{boundary control systems}.

As the main result of this paper, we construct a finite-dimensional low-order controller, which solves the robust output regulation problem for the room model and is based on the work of \cite{Paunonen2019}. Low order of the controller is achieved with two steps. As the first step, we approximate the plant using a finite element approximation and design the controller based on the received operator approximations. As the second step, we reduce the order of the controller by applying \emph{balanced truncation}, see \cite{Moore1981,Pernebo1982,Benner2013}.
As an alternative controller structure, and a point of reference, we construct a low-gain robust controller previously introduced in \cite{Hamalainen2000,Rebarber2003,Paunonen2016}.

The paper is organized as follows. In Section \ref{SPF}, we present the model describing robust thermal regulation problem of a two-dimensional room with distributed control and observation together with boundary disturbance. In Section 
\ref{STH}, we first formulate the robust output regulation problem and then verify certain properties of the PDE system.
The verified system properties are then utilized in Section \ref{Controldesign}, where we formulate the different controller designs used in this paper. Tracking performance of the controllers is illustrated in Section \ref{SNE} using numerical simulations before concluding the paper in Section \ref{SCO}.

We denote by $\mathcal{L}(X,Y)$ the set of bounded linear operators from a normed  space $X$ to a normed space $Y$. Expressions $\langle \cdot, \cdot \rangle_{\Omega}$ and $\langle \cdot, \cdot \rangle_{\Gamma}$ denote $L^2$-inner products on the two-dimensional domain $\Omega$ and one-dimensional domain $\Gamma$, respectively. 

\section{PROBLEM FORMULATION}\label{SPF}

\begin{figure}[h]
	\centering
	\includegraphics[scale=.3]{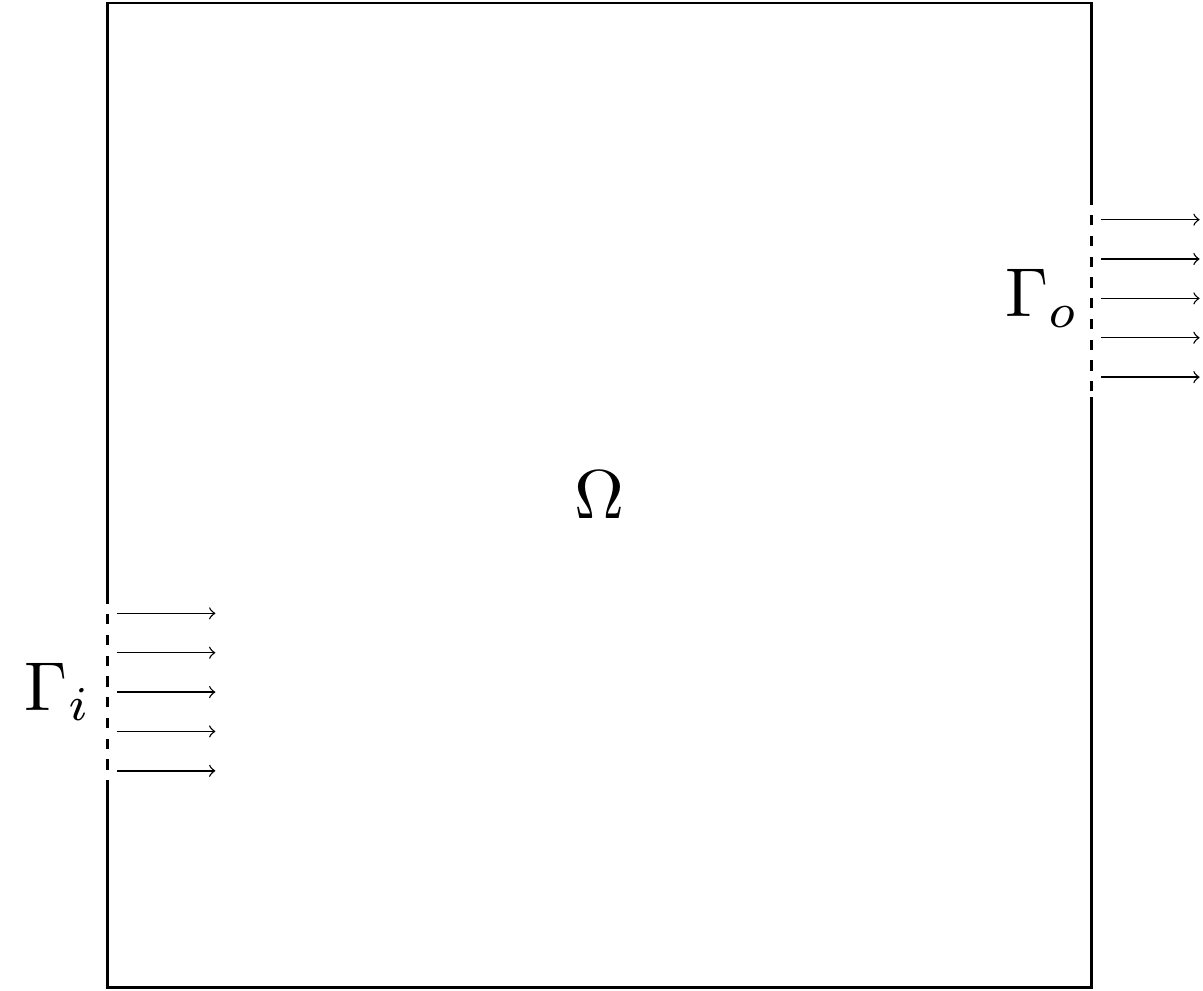}
	\caption{ Outline of the room}
	\label{Roompic}
\end{figure}
We consider a two-dimensional model of a rectangular room with an inlet and an outlet, see Fig. \ref{Roompic}. Denote the rectangle by $\Omega \subset \mathbb{R}^2$ and its piece-wise smooth boundary by $\Gamma$. Denote locations of the inlet and the oulet by $\Gamma_i \subset \Gamma$ and $\Gamma_o \subset \Gamma$, respectively, and assume $\Gamma_i \cap \Gamma_o = \emptyset$. Finally denote by $\Gamma_w = \Gamma \setminus (\Gamma_i \cup \Gamma_o)$ the walls of the room. We assume the temperature evolution of the room to be governed by the advection--diffusion equation coupled with the steady-state incompressible Navier--Stokes equations, i.e.
\begin{subequations}\label{System} 
	\begin{align}
	\dot{\theta}(\xi,t) &= \frac{1}{RePr}\lapl{\theta}(\xi,t)-v(\xi) \cdot \nabla \theta(\xi,t)+b(\xi)u(t),  \quad \theta(\xi,0) = \theta_0(\xi), \\
	0 &= \frac{1}{Re}\lapl{v}(\xi)-v(\xi)\cdot \nabla v(\xi) - \nabla p(\xi), \\
	0 &= \nabla \cdot v(\xi) \label{incompressible}
	\end{align} 
	subject to the boundary conditions
	\begin{align}
	\doo{\theta}{n}|_{\Gamma_i} &=b_dw_d, \qquad  \doo{\theta}{n}|_{\Gamma_o} = 0,
	\qquad \theta|_{\Gamma_w} = 0, \label{distbc} \\
	v|_{\Gamma_w} &= 0, \qquad v|_{\Gamma_i} = f, \qquad
	\big(\mathcal{T}(v,p)\cdot n\big)|_{\Gamma_o} = 0,
	\end{align}
\end{subequations}
where $\theta(\xi,t)$ is the fluid temperature, $v(\xi)$ is the fluid velocity, $p(\xi)$ is the fluid pressure, $Re$ is the Reynolds number, $Pr$ is the Prandtl number, $n$ is the unit outward normal vector of $\Gamma$, $w_d(t)$ is a disturbance signal 
applied according to the disturbance shape function $b_d(\xi)$, $f(\xi)$ is a time independent boundary fluid flux and $\mathcal{T}$ is the fluid Cauchy stress tensor. The chosen boundary conditions for $v$ are often called ``\emph{no-slip}'' for the $\Gamma_w$ part and ``\emph{stress-free}'' for the $\Gamma_o$ part. Finally, $u(t)$ is the input signal of the system
applied by $b(\xi)$.
We consider in-domain observations
\begin{align*}
y(t) = \int_{\Omega} \theta(\xi,t) c(\xi) d\xi,
\end{align*}
where $c(\xi)$ is a weight function. Note that depending on $Re$ and $Pr$, the choices for $b(\xi)$ and $c(\xi)$ may be restricted to guarantee exponential stabilizability and exponential detectability of the system, c.f. Section \ref{Controldesign}.

The reference signals $y_r(t)$ to be tracked and the disturbance signals $w_d(t)$ to be rejected are of the forms
\begin{subequations}\label{Signals}
	\begin{align}
	y_r(t) = \sum_{n=1}^{q}(a_n^c\cos(\omega_nt)+a_n^s\sin(\omega_nt)), \\
	w_d(t) = \sum_{n=1}^{q}(b_n^c\cos(\omega_nt)+b_n^s\sin(\omega_nt)),\label{distsig}
	\end{align}
\end{subequations}
where $\omega_n$ are known frequencies and $a_n^c,a_n^s \in \mathbb{R}^{p_s}$, $b_n^c$, $b_n^s \in \mathbb{R}^d$ are possibly unknown coefficients.
Our goal, more precisely defined in Section \ref{AFS}, is to have $y(t)$ converge exponentially to a given reference $y_r(t)$ despite the disturbance $w_d(t)$. 
\begin{remark}\label{rem:signals}
	The controller designs of this paper can be used for a larger class of signals than presented in \eqref{Signals}, as well as for setups with more inputs than outputs. To be precise, signals with time-dependent polynomial coefficients $a_k^i(t),b_k^j(t)$, $k=1,...,q$ can be handled, see \cite{Hamalainen2000,Paunonen2016}.
\end{remark}

\section{ABSTRACT FORMULATION OF THE CONTROL PROBLEM}\label{STH}

We first present the robust output regulation problem for \emph{abstract linear systems} with bounded control, observation and disturbance. Then we proceed to show that the considered room model fits into this framework.

\subsection{Abstract Linear Control Systems}\label{AFS}

Let $X$ be a Hilbert space. We formulate the plant of a control system as an abstract linear system
\begin{subequations}\label{ACP}
	\begin{align}
	\hspace{-1em}\dot{x}(t) &= Ax(t) + Bu(t) + B_dw_d(t), \quad x(0) = x_0 \in X, \\
	\hspace{-1em} y(t) &= Cx(t) + Du(t) + D_dw_d(t),
	\end{align}
\end{subequations}
where $x(t)$ and $x_0$ are the state and the initial state of the system, $A:D(A) \subset X \longrightarrow X$ is the generator of a strongly continuous semigroup, $B \in \mathcal{L}(U,X)$ is the control operator, $B_d \in \mathcal{L}(U_d,X)$ is the disturbance operator, $C \in \mathcal{L}(X,Y)$ is the observation operator and $D \in \mathcal{L}(U,Y)$ and $D_d \in \mathcal{L}(U_d,Y)$ are the feedthrough operators for the input and the disturbance signals, respectively. Here $U=\mathbb{R}^m$, $Y= \mathbb{R}^{p_s}$ and $U_d = \mathbb{R}^d$. Note that we assume for the system operators other than $A$ to be bounded. 

The \emph{dynamic error feedback controller} on a Hilbert space $Z$ is formulated as 
\begin{subequations}\label{Controller}
	\begin{align}
	\dot{z}(t) &= \mathcal{G}_1z(t) + \mathcal{G}_2e(t), \qquad z(0) = z_0 \in Z, \\
	u(t) &= Kz(t),
	\end{align}
\end{subequations}
where $z(t)$ and $z_0$ are the state and the initial state of the controller, $\mathcal{G}_1:D(\mathcal{G}_1)\subset Z \longrightarrow Z$ is the generator of a strongly continuous semigroup on $Z$, $\mathcal{G}_2 \in \mathcal{L}(Y,Z)$, $e(t)=y(t)-y_r(t)$ is the regulation error and $K \in \mathcal{L}(Z,U)$. For practicality, we aim to have $Z$ be finite-, preferably low-dimensional for efficient applicability of the controller.

Finally, we define the \emph{closed-loop system}, see \cite{Paunonen2010}, consisting of the plant and the controller with the state $x_e = (x(t),  z(t))^T$ and the initial state $x_{e0} = (x_0,z_0)^T \in X \times Z \eqqcolon X_e$ as
\begin{align*}
\dot{x}_e(t) &= A_ex_e(t)+B_ew_e(t), \qquad x_e(0) = x_{e0}, \\
e(t) &= C_ex_e(t) + D_ew_e(t),
\end{align*}
where $w_e(t) = \begin{bmatrix}
w_d(t), & y_r(t)
\end{bmatrix}^T$,
\begin{align*}
A_e &= \begin{bmatrix}
A & BK \\ \mathcal{G}_2C & \mathcal{G}_1+\mathcal{G}_2DK
\end{bmatrix},
\qquad
B_e = \begin{bmatrix}
B_d & 0 \\ \mathcal{G}_2D_d & -\mathcal{G}_2
\end{bmatrix}, \\
C_e &= \begin{bmatrix}
C, & DK
\end{bmatrix},
\qquad
D_e = \begin{bmatrix}
D_d, & -I
\end{bmatrix}
\end{align*}
and $A_e$ generates a strongly continuous semigroup $T_e(t)$ on $X_e$.

The robust output regulation problem for systems of the form \eqref{ACP} can be formulated as follows.
\vskip .1cm
\noindent\textbf{The Robust Output Regulation Problem}. Design a controller of the form \eqref{Controller} such that
\begin{enumerate}[(i)]
	\item The semigroup $T_e(t)$ is exponentially stable.
	\item For every $x_{e0}$ and $y_r(t),w_d(t)$ of the form \eqref{Signals},
	\begin{align}\label{tracking}
	||e(t)|| \le M_ee^{-\omega_et}(||x_{e0}||+||\Lambda||)
	\end{align}
	for some $M_e,\omega_e > 0$, where vector $\Lambda$ contains the constants $\{a_n^c\}_n$, $\{a_n^s\}_n$, $\{b_n^c\}_n$, $\{b_n^s\}_n$.	
	\item If the operators $A,B,B_d,C,D,D_d$ are perturbed in a way such that the perturbed closed-loop system remains exponentially stable, then \eqref{tracking} is still satisfied for all $x_{e0}$ and $y_r(t),w_d(t)$ of the form \eqref{Signals} for some $M_{ep},\omega_{ep}>0$. 
\end{enumerate}
By the internal model principle, see \cite{Paunonen2010}, a controller with $\mathcal{G}_1$ including an internal model based on the frequencies $\{\omega_n\}_{n=1}^q$ and $\mathcal{G}_2,K$ chosen such that the closed-loop system is exponentially stable solves the robust output regulation problem.

\subsection{The Room Model as a Control System}\label{ContSys}
To represent the room model \eqref{System} as an abstract control system,
we assume $b,c \in L^2(\Omega)$ and $b_d \in L^2(\Gamma_i)$, define
\begin{align*}
X &= L^2(\Omega),  \qquad 
H_\theta = \big\{\theta \in H^1(\Omega)\big|\ \theta|_{\Gamma_w} = 0 \big\},
\\B &= b(\xi) \in \mathcal{L}(U,X), \qquad C = \langle \cdot, c \rangle_{\Omega} \in \mathcal{L}(X,Y)
\end{align*}
and note that a steady state solution $(v_e,p_e) \in \big\{v \in (H^1(\Omega))^2 \big|\ \nabla \cdot v = 0, \ v|_{\Gamma_w} = 0 \big\} \times L^2(\Omega)$ for the Navier-Stokes equations in \eqref{System} is guaranteed to exist by \cite{Mazya2009}. Define the bilinear form
\begin{equation*}
a(\theta,\phi) = \alpha \langle \nabla \theta,\nabla \phi \rangle_{\Omega} + \langle v_e \cdot \nabla \theta, \phi \rangle_{\Omega} \qquad \forall \theta,\phi \in H_\theta,
\end{equation*}
where $\alpha \coloneqq 1/(RePr)$, and define the operator $A$ by
\begin{align*}
&\langle A\theta,\phi \rangle_{\Omega} = -a(\theta,\phi), \\
&D(A) = \big\{ \theta \in H_\theta \big| \forall \phi \in H_\theta, \ \phi \to a(\theta,\phi) \textrm{ is $L^2$-continuous} \big\}.
\end{align*}
\begin{theorem}
	The bilinear form $a(\cdot, \cdot)$ is $H_\theta$-bounded and $H_\theta$-coercive, and the operator $A$ generates an analytic semigroup on $X$.
\end{theorem}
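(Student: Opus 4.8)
The plan is to verify the two asserted properties of the (here real and bilinear) form $a$ and then invoke the classical generation theorem for closed coercive forms. Two structural facts do the work: $\Omega\subset\mathbb{R}^2$, so $H^1(\Omega)$ embeds continuously into $L^4(\Omega)$; and the steady state field $v_e\in(H^1(\Omega))^2$ is divergence-free with $v_e|_{\Gamma_w}=0$, while every $\theta\in H_\theta$ vanishes on $\Gamma_w$. For $H_\theta$-boundedness, the diffusion part gives $\alpha|\langle\nabla\theta,\nabla\phi\rangle_\Omega|\le\alpha\|\theta\|_{H_\theta}\|\phi\|_{H_\theta}$ by Cauchy--Schwarz, and the convective part is bounded by the generalized H\"older inequality, $|\langle v_e\cdot\nabla\theta,\phi\rangle_\Omega|\le\|v_e\|_{L^4(\Omega)}\|\nabla\theta\|_{L^2(\Omega)}\|\phi\|_{L^4(\Omega)}$, followed by $H^1(\Omega)\hookrightarrow L^4(\Omega)$ applied to $\phi$ (and to $v_e$, whose $L^4$-norm is then a fixed finite constant). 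Together these yield $|a(\theta,\phi)|\le C\|\theta\|_{H_\theta}\|\phi\|_{H_\theta}$.

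For $H_\theta$-coercivity, I would first note that since $\Gamma_w$ has positive surface measure and $\theta|_{\Gamma_w}=0$, the Poincar\'e inequality gives $\|\nabla\theta\|_{L^2(\Omega)}^2\ge c_P\|\theta\|_{H_\theta}^2$ for some $c_P>0$, so the diffusion term alone already controls the full $H_\theta$-norm; it remains to absorb the convective term into it. One clean route uses the two-dimensional interpolation (Ladyzhenskaya / Gagliardo--Nirenberg) inequality $\|\theta\|_{L^4(\Omega)}^2\le C\|\theta\|_{L^2(\Omega)}\|\theta\|_{H^1(\Omega)}$ to obtain $|\langle v_e\cdot\nabla\theta,\theta\rangle_\Omega|\le C\|v_e\|_{L^4(\Omega)}\|\theta\|_{H_\theta}^{3/2}\|\theta\|_X^{1/2}$, and then Young's inequality splits the right-hand side as $\varepsilon\|\theta\|_{H_\theta}^2+C_\varepsilon\|\theta\|_X^2$; choosing $\varepsilon<\alpha c_P$ produces $a(\theta,\theta)\ge\delta\|\theta\|_{H_\theta}^2-\omega_0\|\theta\|_X^2$ for some $\delta>0$, $\omega_0\ge 0$, i.e.\ coercivity in the sense of G{\aa}rding's inequality. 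An alternative is to integrate by parts, using $\nabla\cdot v_e=0$ and $v_e|_{\Gamma_w}=0$ to reduce $\langle v_e\cdot\nabla\theta,\theta\rangle_\Omega$ to a boundary integral over $\Gamma_i\cup\Gamma_o$, which is then handled by the trace theorem and interpolation --- and which, under the physically natural sign conditions on the inlet and outlet fluxes, is in fact nonnegative, yielding strict coercivity ($\omega_0=0$); I would keep whichever argument is shorter.

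It then remains to pass from the form to the operator. The space $H_\theta$ is a closed subspace of $H^1(\Omega)$, hence a Hilbert space, and is densely embedded in $X=L^2(\Omega)$ since it contains $C_c^\infty(\Omega)$; boundedness and coercivity then make $a$ a closed form on $H_\theta$. By the Riesz representation theorem, the domain and action of $A$ as defined in the statement coincide with those of the operator produced from $a$ by the standard form construction (with the present sign convention), so the classical generation theorem for closed, bounded, coercive sesquilinear forms applies and shows that $A-\omega_0$ is m-sectorial and generates a bounded analytic $C_0$-semigroup on $X$; the required sectoriality estimate follows because the antisymmetric part of $a$ --- the convective term --- is bounded relative to the $H_\theta$-norm, hence $A$ generates an analytic semigroup on $X$. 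The step I expect to be the main obstacle is the coercivity estimate: the convective term is not sign-definite and, in two dimensions, $v_e$ need not belong to $L^\infty(\Omega)$, so absorbing it into the diffusion term genuinely requires either the sharp 2D interpolation inequalities or the divergence-free structure together with its boundary terms. Boundedness and the passage to analyticity are routine once this is in hand.
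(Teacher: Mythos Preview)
Your proposal is correct, but your primary routes for both boundedness and coercivity differ from the paper's. For boundedness, you bound the convective term directly by $\|v_e\|_{L^4}\|\nabla\theta\|_{L^2}\|\phi\|_{L^4}$ and invoke $H^1(\Omega)\hookrightarrow L^4(\Omega)$; the paper instead integrates by parts first, producing a boundary term $\langle v_e\cdot n,\theta\phi\rangle_\Gamma$ and an interior term $\langle v_e\theta,\nabla\phi\rangle_\Omega$, each then controlled via the two-dimensional product estimate $\|\phi\psi\|_{L^2}\le k\|\phi\|_{H^1}\|\psi\|_{H^1}$ together with trace and duality properties. Your argument is shorter and avoids boundary traces altogether. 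For coercivity, your main route is again a pure interior estimate: H\"older plus Ladyzhenskaya plus Young give $|\langle v_e\cdot\nabla\theta,\theta\rangle_\Omega|\le\varepsilon\|\theta\|_{H_\theta}^2+C_\varepsilon\|\theta\|_X^2$, and you absorb $\varepsilon\|\theta\|_{H_\theta}^2$ into the diffusion. The paper instead uses the divergence-free structure and integration by parts to rewrite $\langle v_e\cdot\nabla\theta,\theta\rangle_\Omega=\tfrac12\langle v_e\cdot n,\theta^2\rangle_\Gamma$, then bounds this boundary integral by Ladyzhenskaya and Young --- this is precisely the alternative you sketch in passing. The paper's route exploits the structural data ($\nabla\cdot v_e=0$, $v_e|_{\Gamma_w}=0$, $\theta|_{\Gamma_w}=0$) more explicitly and isolates the obstruction to strict coercivity as a flux term on $\Gamma_i\cup\Gamma_o$; your route is more robust in that it requires none of that structure, only $v_e\in L^4$. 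Both lead to the same G{\aa}rding inequality, and the passage to analytic generation is standard in either case; the paper simply cites a reference rather than spelling out the form-to-operator argument you outline.
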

\begin{proof}
	Let  each $k_{(\cdot)}$ denote a constant.
	For $\theta,\phi \in H_\theta$, using integration by parts on the advection term and recalling $\nabla \cdot v_e = 0$, we get
	\begin{align*}
	|a(\theta,\phi)| 
	&\le \alpha|\langle \nabla \theta, \nabla \phi \rangle_{\Omega}| + |\langle v_e \cdot \nabla \theta,\phi \rangle_{\Omega}|
	\\&\le \alpha|\langle \nabla \theta, \nabla \phi \rangle_{\Omega}| + |\langle v_e \cdot n , \theta \phi \rangle_{\Gamma}|+ |\langle v_e \theta, \nabla \phi \rangle_{\Omega}|.
	\end{align*}	
	By Sobolev embedding theorems, for $\phi,\psi \in H^1(\Omega)$
	\begin{equation}\label{Sembedding}
	||\phi\psi||_{L^2(\Omega)}
	\le k_1||\phi||_{H^1(\Omega)}||\psi||_{H^1(\Omega)},
	\end{equation}
	which together with properties of the trace operator and $L^2$-duality of $H^{\frac{1}{2}}$ and $H^{-\frac{1}{2}}$ implies
	\begin{equation*}
	|\langle v_e \cdot n,\theta \phi \rangle_{\Gamma}| 
	\le k_2 ||v_e||_{H^1(\Omega)}||\theta||_{H^1(\Omega)}||\phi||_{H^1(\Omega)}.
	\end{equation*}
	Applying \eqref{Sembedding} for the third term and using Poincare's inequality, we finally get
	\begin{equation*}
	|a(\theta,\phi)| 
	\le k_3 ||v_e||_{H^1(\Omega)}||\theta||_{H^1(\Omega)}||\phi||_{H^1(\Omega)},
	\end{equation*}
	thus $a(\cdot,\cdot)$ is $H_\theta$-bounded.
	
	By Ladyzhenskaya's and Young's inequalities,
	\begin{align*}
	|\langle v_e \cdot n,\theta^2 \rangle_{\Gamma}| 
	&\le k_4||v_e||_{H^1(\Omega)}||\theta||_{L^2(\Omega)}||\nabla\theta||_{L^2(\Omega)}
	\\&\le \frac{k_4^2}{4\alpha} ||v_e||_{H^1(\Omega)}^2||\theta||_{L^2(\Omega)}^2 + \alpha ||\nabla\theta||_{L^2(\Omega)}^2,
	\end{align*}
	therefore
	\begin{align*}
	a(\theta,\theta) 
	&= \alpha ||\nabla \theta||^2_{L^2(\Omega)} + \frac{1}{2}\langle v_e \cdot n,\theta^2 \rangle_{\Gamma}
	\\&\ge \alpha ||\nabla \theta||^2_{L^2(\Omega)} - \frac{k_4^2}{8\alpha}||v_e||^2_{H^1(\Omega)}||\theta||^2_{L^2(\Omega)} - \frac{\alpha}{2}||\nabla\theta||^2_{L^2(\Omega)}
	\end{align*}
	and hence, for $\lambda \ge \frac{k_4^2}{8}||v_e||^2_{H^1(\Omega)}$
	\begin{equation*}
	a(\theta,\theta) + \lambda||\theta||^2_{L^2(\Omega)} \ge \frac{\alpha}{2} ||\theta||_{H^1(\Omega)}^2,
	\end{equation*}	
	which indicates that $a(\cdot,\cdot)$ is $H_\theta$-coercive. This in turn implies that $A$ generates an analytic semigroup on $X$, see \cite{Banks1997}.
\end{proof}

Due to the disturbance signal $w_d$ being applied on the boundary via $b_d$, the corresponding disturbance operator $\tilde{B}_d$ is not bounded from $U_d$ to $X$. However, since the disturbance signals \eqref{distsig} are smooth, we can use a change of variable $x = \tilde{x} - B_{di}w_d$ to homogenize the disturbance boundary condition \eqref{distbc}, c.f. \cite[Ch. 3.3]{Curtain1995}. Here we have denoted the original state variable by $\tilde{x}$, the final state variable by $x$ and $B_{di}$ is a right inverse of $\doo{(\cdot)}{n} |_{\Gamma_i}$. The change of variable leads to a bounded disturbance operator $B_d$ while also introducing a bounded disturbance feedthrough operator $D_d$.
As such, after the change of variable the room model can be presented in the form \eqref{ACP} with the operators other than $A$ being bounded.
As we will see in Section \ref{Controldesign}, the operators $B_d$ and $D_d$ are not used for the controller construction. Thus it is enough to know that the room model can be expressed as \eqref{ACP} with bounded operators $B_d$ and $D_d$, but we do not need an exact expression for these operators.

\section{CONTROLLER DESIGN}\label{Controldesign}
We present two different controller designs to solve the robust output regulation problem for systems of the form \eqref{ACP}, which we now know includes the room model \eqref{System}. The first one is the ``main'' controller design of the paper, while the second one is constructed mainly for performance comparison purposes.

The ``main'' controller structure, called ``dual observer-based finite-\linebreak[4] dimensional controller'', makes use of a Galerkin approximation to design an observer-based yet finite-dimensional controller.
To further decrease the size of the controller, balanced truncation is included as a part of the controller design process.
From now on, $(\cdot)^N$ refers to the Galerkin approximation of an operator and $(\cdot)^r$ refers to an operator obtained through model reduction by balanced truncation.

For the details on the controller design, as well as a short introduction to the Galerkin approximation and balanced truncation, see \cite{Paunonen2019}. For a more complete theory, see for example \cite{Strang2008,Benner2013}. When designing  the controllers, we make the following assumptions.
\begin{itemize}
	\item The pair $(A,B)$ is exponentially stabilizable and $(A,C)$ is exponentially detectable.
	\item For a finite-dimensional approximating subspace $H_\theta^N$ of $H_\theta$,
		\begin{align}\label{Galerkin}
		&\forall \phi \in H_\theta \ \exists (\phi^N)_N, \ \phi^N \in H_\theta^N: 
		||\phi^N-\phi||_V \xrightarrow{N \to \infty} 0.
		\end{align}
	\item Orders $N$ and $r \le N$ are chosen large enough.
	\item For simplicity and recalling Remark \ref{rem:signals}, the system has equal number of inputs and outputs.
\end{itemize}
Note that since the operator $A$ has only point spectrum and a finite number of positive eigenvalues each with finite multiplicity, the stabilizability and detectability considerations can be reduced to controllability and observability checks of the finite-dimensional unstable parts, see \cite{Badra2014,Burns2016}.
\vskip .1cm
\noindent\textbf{The Dual Observer-Based Finite-Dimensional Controller}. Define a controller of the form \eqref{Controller} with $z(t)=\begin{bmatrix}z_1(t), & z_2(t)
\end{bmatrix}^T$ by 
\begin{subequations}\label{dcontroller}
	\begin{align}
	\dot{z}_1(t) &= G_1z_1(t) + G_2^NC_K^rz_2(t) + G_2^Ne(t), \\
	\dot{z}_2(t) &= (A_K^r + L^rC^r_K)z_2(t) + L^re(t), \\
	u(t) &= K_1z_1(t) - K_2^rz_2(t),
	\end{align}
\end{subequations}
where the operators are chosen as follows, see \cite{Paunonen2019}.
Choose positive matrices $R_1 \in \mathcal{L}(U)$, $R_2 \in \mathcal{L}(Y)$, constants $\alpha_1,\alpha_2 > 0$ and operators $Q_1 \in \mathcal{L}(X,Y_0)$, $Q_2 \in \mathcal{L}(U_0,X)$ with Hilbert spaces $U_0,Y_0$, such that $(A+\alpha_1I,B,Q_1)$ and $(A+\alpha_2,Q_2,C)$ are both exponentially stabilizable and detectable.
Define
\begin{align*}
G_1 &= \textrm{diag}(J_1^Y,J_2^Y,...,J_q^Y) \ \mbox{with} \ J_k^Y = \begin{bmatrix}
0_p & \omega_kI_p \\
-\omega_kI_p & 0_p
\end{bmatrix}, \\
K_1 &= (K_1^k)_{k=1}^q, \qquad K_1^k = [I_p, \ 0_p], \\
\mathcal{G}_2^N &=
\begin{bmatrix}
G^N_2 \\ L^N
\end{bmatrix} = -\Pi_NC_s^NR_2^{-1}, \quad
K_2^N = -R_1^{-1}\big(B^N\big)^*\Sigma_N, \\
A_s^N &= \begin{bmatrix}
G_1 & 0 \\ B^NK_1 & A^N
\end{bmatrix}.
\end{align*}
Here $\Sigma_N,\Pi_N$ are the non-negative solutions of the finite-dimensional Riccati equations
\begin{align*}
&(A^N+\alpha_1I)^*\Sigma_N + \Sigma_N(A^N+\alpha_1I)-\Sigma_NB^NR_1^{-1}(B^N)^*\Sigma_N=-(Q_1^N)^*Q_1^N, \\
&(A_s^N+\alpha_2I)\Pi_N+\Pi_N(A_s^N+\alpha_2)^*-\Pi_N(C_s^N)^*R_2^{-1}C_s^N\Pi_N = -Q_2^N(Q_2^N)^*.
\end{align*}
Finally, use balanced truncation on the system
\begin{align*}
\Big(A^N+B^NK_2^N,L^N,\begin{bmatrix}
C^N+DK_2^N \\ K_2^N
\end{bmatrix}\Big)
\end{align*}
to obtain a stable $r$-dimensional system
\begin{align*}
\Big( A_K^r,L^r,\begin{bmatrix}
C_K^r \\ K_2^r
\end{bmatrix} \Big).
\end{align*}
With the aforementioned choices, \eqref{dcontroller} solves the robust output regulation problem for signals of the form \eqref{Signals} provided that $N,r\le N$ are large enough.

The second controller design is considerably more straightforward to construct. However, it requires the plant to be exponentially stable instead of only exponentially stabilizable. Note that this controller design can also be used for initially unstable systems as long as we manage to first stabilize them and only afterwards design the controller.

\vskip .1cm
\noindent\textbf{The Low-Gain Robust Controller}.
Assuming that $A$ generates an exponentially stable semigroup on $X$, choose the operators in \eqref{Controller} as
\begin{subequations}\label{econtroller}
	\begin{align}
	\mathcal{G}_1 &= G_1, \qquad \mathcal{G}_2 = (\mathcal{G}_2^k)_{k=1}^{q}, \qquad \mathcal{G}_2^k = \begin{bmatrix}
	-I_p & 0_p
	\end{bmatrix}^T, \\
	K &= \epsilon K_0 = \epsilon(K_0^1,...,K_0^q),\\
	K_0^k &= \begin{bmatrix}
	\textrm{Re}(P(i\omega_k)^{-1}) & \textrm{Im}(P(i\omega_k)^{-1})
	\end{bmatrix}, 
	\end{align}
\end{subequations}
where $P(\cdot)$ is the \emph{transfer function} of the plant \eqref{ACP}
and $\epsilon>0$ is a parameter used to tune the controller. Note that besides stability, the only information of the plant required for this controller is that of the transfer function values at certain frequencies, hence it is easy to implement.
As is shown in \cite{Paunonen2016}, this simple controller structure solves the robust output regulation problem for signals of the form \eqref{Signals} and small enough $\epsilon>0$.

\section{NUMERICAL EXAMPLE}\label{SNE}

In this section we illustrate the two controller structures in action. We consider a case with the control and disturbance profiles given by
\begin{align*}
b(\xi) = \chi_{[0,0.05] \times [0.1,0.4]}(\xi), \qquad b_d(\xi) = \chi_\Gamma|_{\Gamma_i}(\xi).
\end{align*}
Existence of $B_{di}$ can be verified according to \cite[Ch. 10]{TucsnakWeiss}.
Observation weights are given by one of the two options
\begin{subequations}
	\begin{align}
	c^{(1)}(\xi) &= 0.2^{-2}\chi_{[0.7,0.9] \times [0.1,0.3]}(\xi),\label{setup1} \\
	c^{(2)}(\xi) &= 0.2^{-2}\chi_{[0.1,0.3] \times [0.7,0.9]}(\xi),\label{setup2}
	\end{align}
\end{subequations}
which yield us two different setups $(b,c^{(1)})$ and $(b,c^{(2)})$.

Let the room be defined by $\Omega = [0,1] \times [0,1]$, $\Gamma_i = \{\xi_1 = 0, \ 0.1 \le \xi_2 \le 0.4\}$ and $\Gamma_o = \{ \xi_1 = 1, \ 0.5 \le \xi_2 \le 0.9 \}$. We choose $Re = 100$, $Pr=0.7$ and consider the inlet velocity profile
\begin{align*}
f(\xi) = \begin{bmatrix}
\exp(-\frac{0.0001}{((0.5-\xi_2)(0.9-\xi_2))^2}, & 0
\end{bmatrix}^T.
\end{align*}
With these choices, we check numerically that the analytic semigroup generated by $A$ is exponentially stable, thus no stabilization step is required for the low-gain controller.

As the first step, we need to calculate the steady state solution for the incompressible Navier--Stokes advection field. For this we employ the Taylor-Hood finite element scheme. That is, we triangulate $\Omega$ and use quadratic shape functions for the velocity approximation and linear shape functions for the pressure approximation. We calculate the steady state solution using Newton's method with the initial guess given by the steady state incompressible Stokes equation. Quadratic shape functions, which satisfy \eqref{Galerkin} by \cite{Strang2008}, are then also used for temperature approximation.
The mesh for quadratic shape functions contains $81$ nodes in each direction with uniform spacing. The calculated steady state advection field is depicted in Fig. \ref{vfield} with the ends of the inlet highlighted with black dots and the ends of the outlet highlighted with red dots.
\begin{figure}[h]
	\begin{subfigure}[]{0.44\columnwidth}
		\includegraphics[width=\linewidth,height=5cm]{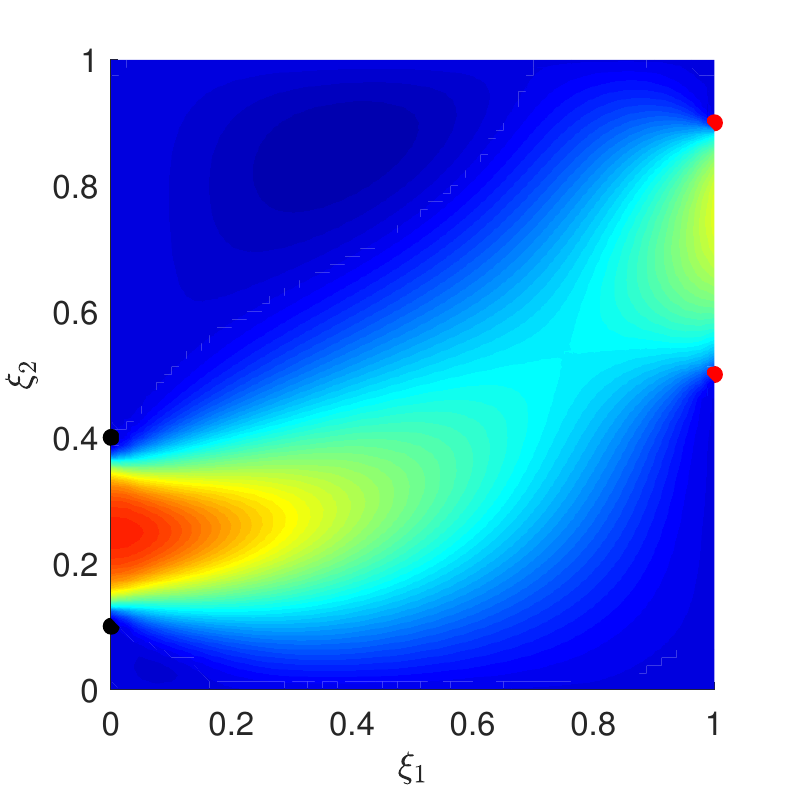}
	\end{subfigure} 
	\begin{subfigure}[]{0.55\columnwidth}
		\includegraphics[width=\linewidth,height=5cm]{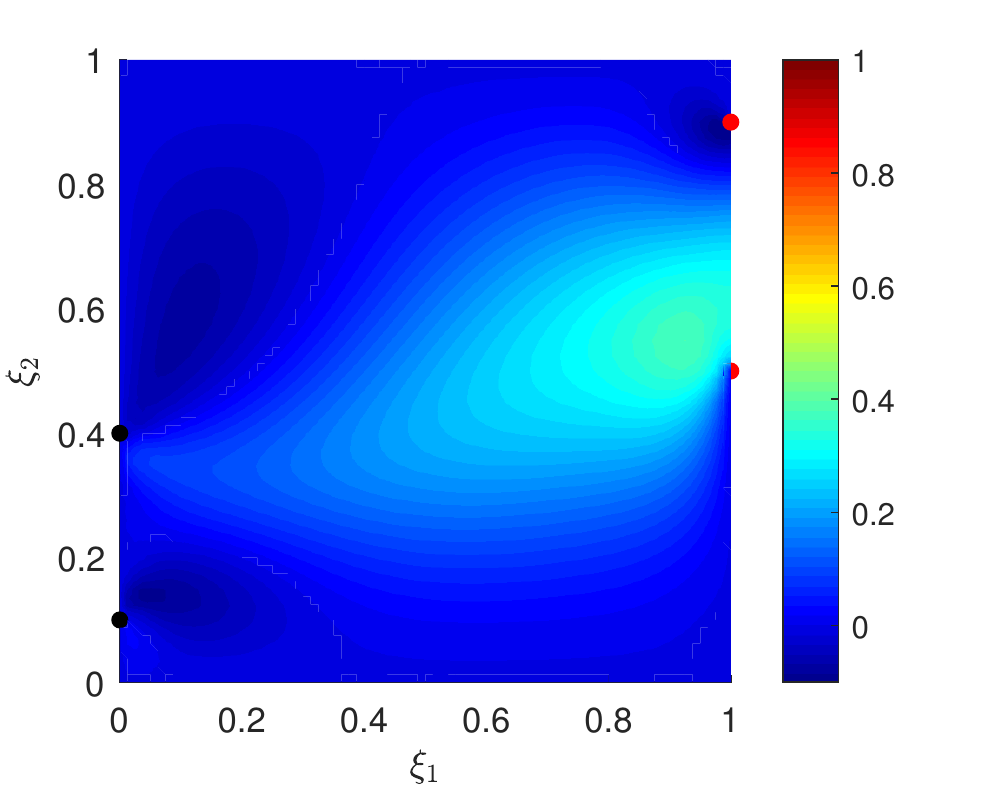}
	\end{subfigure} \hfill
	\caption{Steady state velocity field, $v_{e\xi_1}$ on the left and $v_{e\xi_2}$ on the right}
	\label{vfield}
\end{figure}

We choose as the reference and disturbance signals
\begin{align*}
y_r(t) = \sin(t) + 2\cos(2t), \qquad w_d(t) = 1.5\cos(3t), 
\end{align*}
which leads to an internal model of dimension $Z_0=6$ in our controller designs. Using the full mesh to approximate the infinite-dimensional plant leads, after accounting for the boundary conditions, to a model order of $6297$.

For the dual observer-based controller, we then use a sparser mesh with 41 nodes in each direction for quadratic shape functions to construct the operators $(\cdot)^N$ yielding a plant approximation of order $1549$, and reduce the order down to $r=10$ using the \texttt{balred} MATLAB function to construct the operators $(\cdot)^r$. Thus the controllers are of sizes $\dim Z_f = 1555$, $\dim Z_r = 16$ and $\dim Z_{lg} = 6$ for the full, reduced and low-gain controller, respectively. 
For each observation setup, we test two parameter sets for the controller \eqref{dcontroller};
\begin{subequations}
	\begin{align}
	\alpha_1&=\alpha_2=R_1=R_2=1,\label{choice1} \\
	\alpha_1'&=\alpha_2'=0.5, \qquad R_1'=R_2'=100, \label{choice2}
	\end{align}
\end{subequations}
with $Q_1=Q_2=I$ for both parameter sets.

For the low-gain robust controller, we  use the full model of order $6297$ to approximate the transfer function
\begin{align*}
P(s) = C(sI-A)^{-1}B,
\end{align*}
and choose $\epsilon_1=0.08$, $\epsilon_2=0.05$ for the observation setups \eqref{setup1} and \eqref{setup2}, respectively. These choices are based on roughly maximizing the stability margin of the closed-loop system.
Finally, we choose $x_{e0}=\begin{bmatrix}
1_X, & 0_Z
\end{bmatrix}^T$ as the initial state for each of the simulations.

The tracking performances of the controllers, illustrated in terms of the system outputs for the setup \eqref{setup1} and in terms of the tracking errors for the setup \eqref{setup2}, are presented in Figs \ref{error1} and \ref{error2}. The figures include either  the observations $y_{(\cdot)}(t)$ (Fig. \ref{error1}) or the errors $e_{(\cdot)}(t)$ (Fig. \ref{error2}) for five different controllers: $y_{red}(t),e_{red}(t)$ and $y_{red}'(t),e_{red}'(t)$ for \eqref{dcontroller} with model reduction, $y_f(t),e_f(t)$ and $y_f'(t),e_f'(t)$ for \eqref{dcontroller} without model reduction and $y_{lg}(t),e_{lg}(t)$ for the low-gain controller.

We observe for the performance difference between the model reduced and the full versions of \eqref{dcontroller} to be negligible at least for the chosen closed-loop initial state and $r$, since $\sup_{t\in [0,20]}|y_f(t)-y_{red}(t)| \lessapprox 0.01$ across all of the observation and parameter variants.
For the observation setup \eqref{setup1}, there are no major differences in the tracking performance between the parameter choices, but for \eqref{setup2} the choice \eqref{choice1} converges faster. 
For both setups, error convergence of the low-gain controller is the slowest despite the attempted optimization of $\epsilon_i$.
\begin{figure}[h]
	\begin{center}
		\includegraphics[width=8.63cm,height=3.5cm]{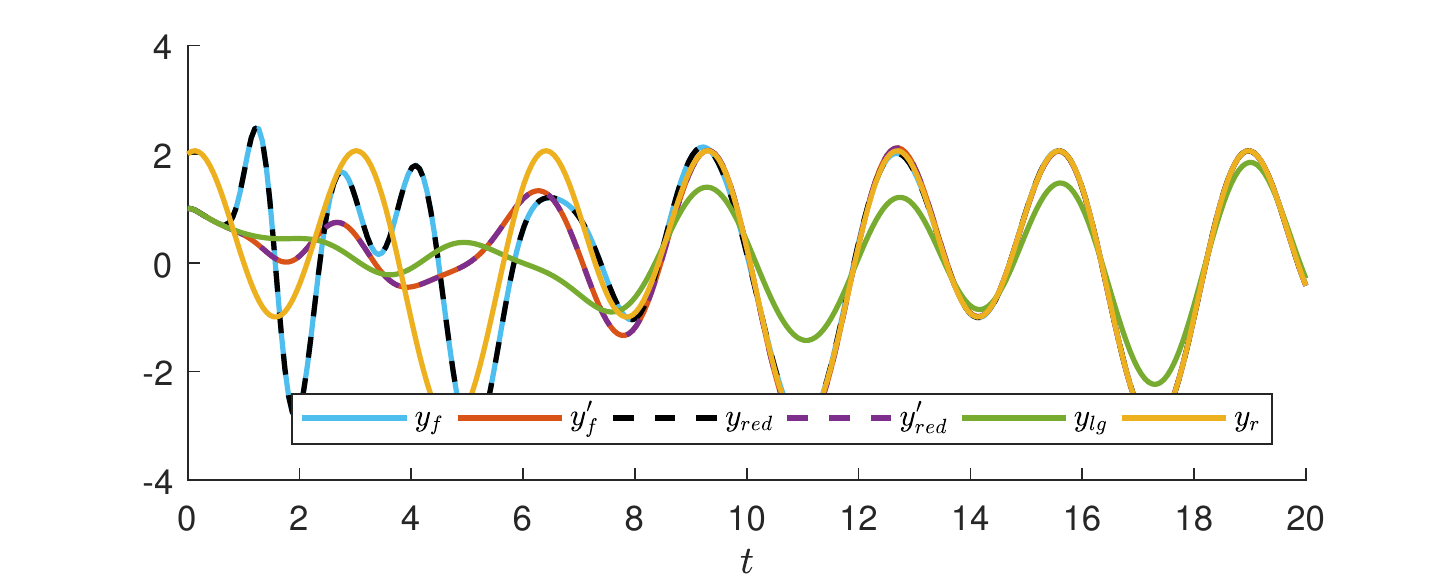}
		\caption{Plant outputs with the observation setup \eqref{setup1} for the different controllers} 
		\label{error1}
	\end{center}
\end{figure}
\begin{figure}[h]
	\begin{center}
		\includegraphics[width=8.63cm,height=3.5cm]{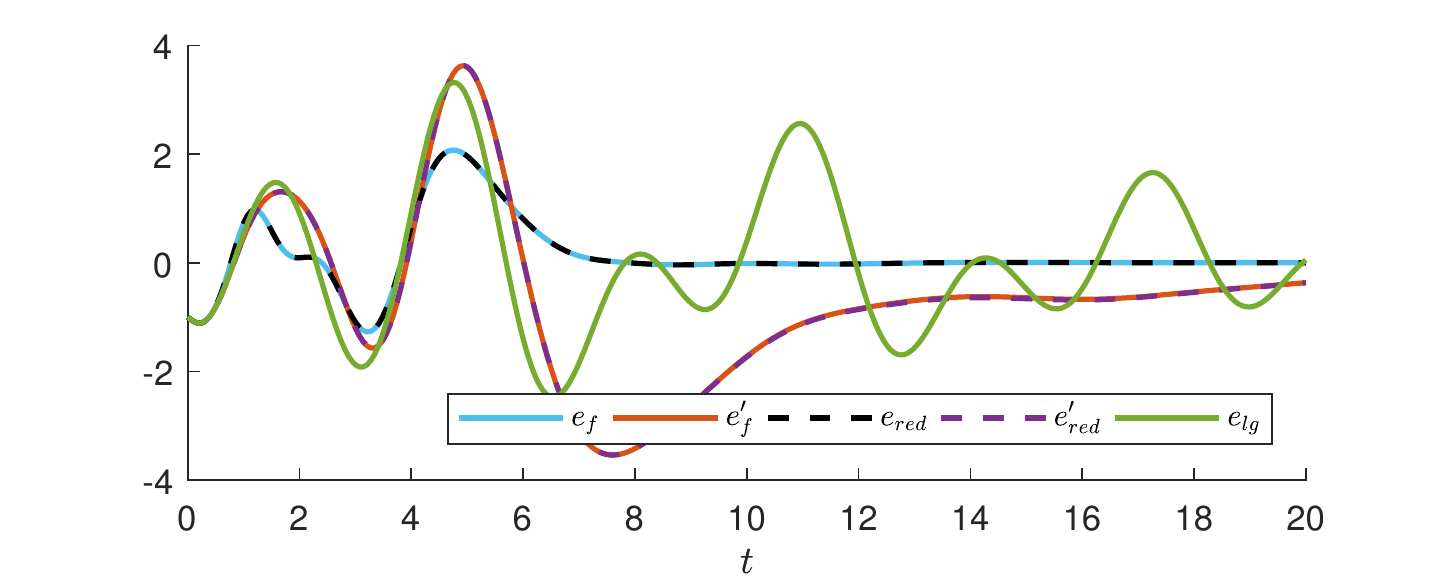}
		\caption{Tracking errors with the observation setup \eqref{setup2} for the different controllers} 
		\label{error2}
	\end{center}
\end{figure}

The plant states at time $t=20$ for both of the setups using controller \eqref{dcontroller} with model reduction and the choice \eqref{choice1} are presented in Fig. \ref{state}. For both of the state plots, the ends of the outlet are highlighted with red dots, the corners of the observed area by magenta dots and the corners of the controlled area, coinciding with the ends of the inlet at the boundary, with black dots.
\begin{figure}[h]
	\begin{subfigure}[]{0.49\columnwidth}
		\includegraphics[width=\linewidth,height=4.5cm]{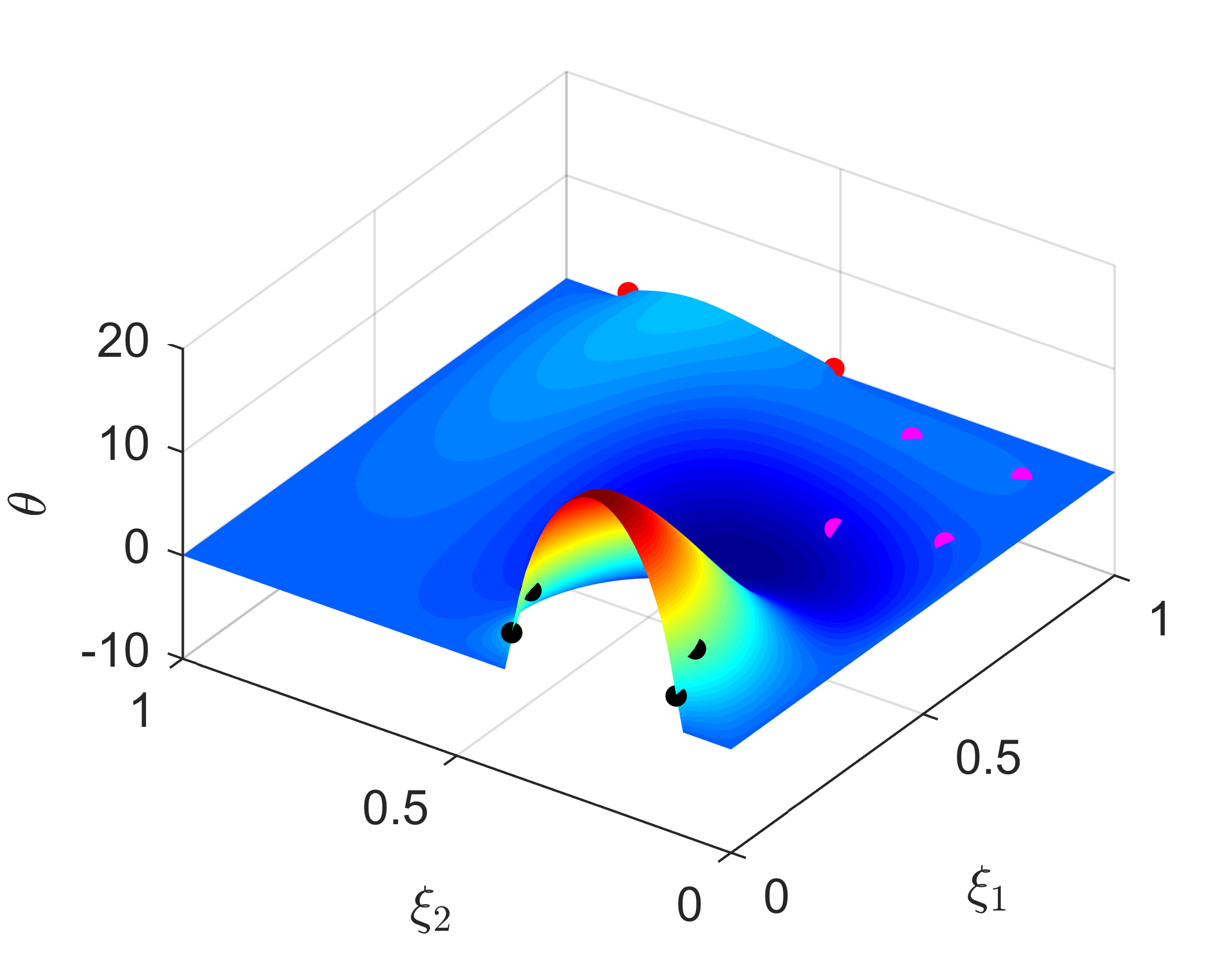}
	\end{subfigure} 
	\begin{subfigure}[]{0.49\columnwidth}
		\includegraphics[width=\linewidth,height=4.5cm]{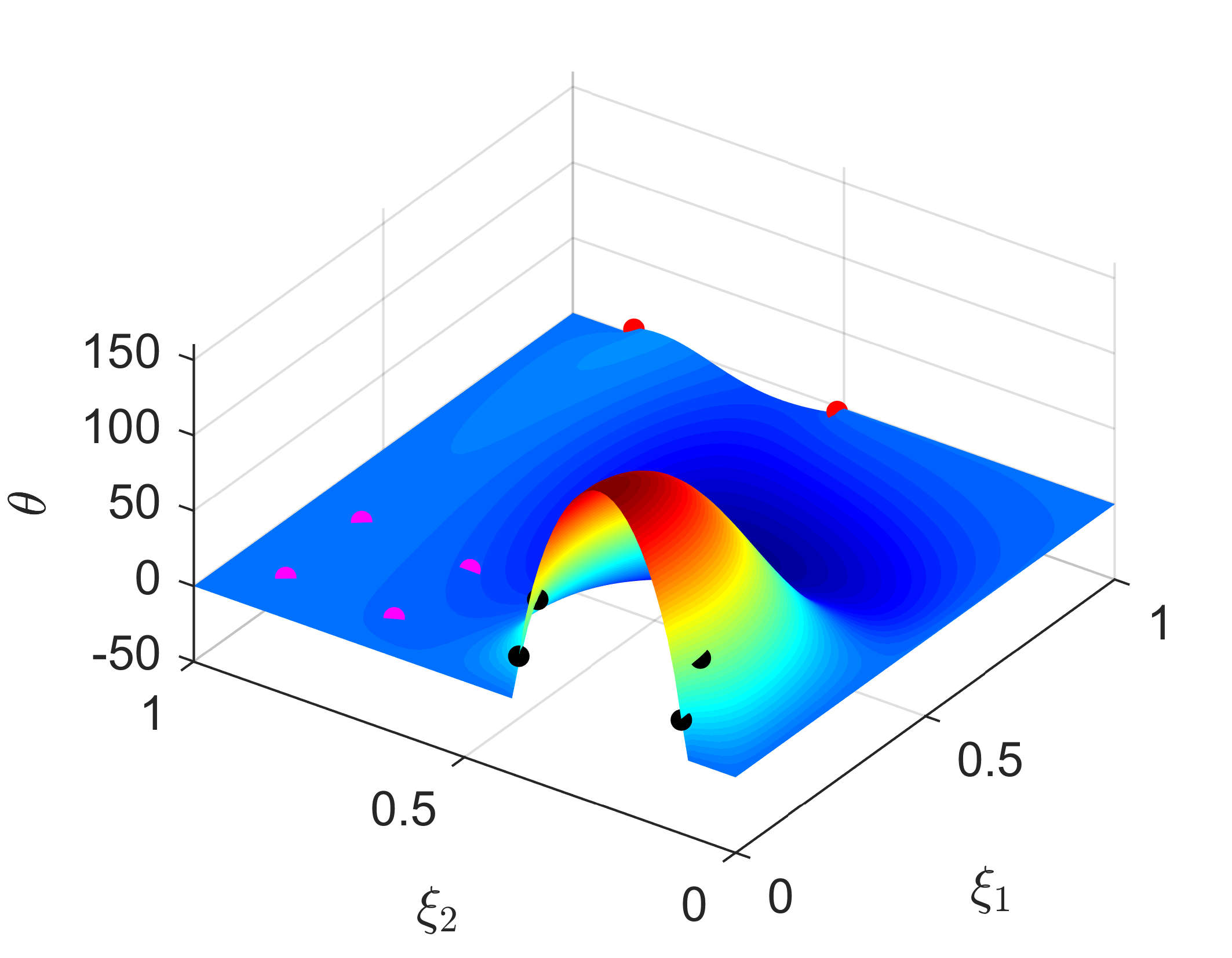}
	\end{subfigure} \hfill
	\caption{State of the plant at time $t=20$ with the observation setup \eqref{setup1} on the left and \eqref{setup2} on the right}
	\label{state}
\end{figure}

The temperature ranges in the room during the simulations vary greatly depending on the observation setup and the chosen controller. In the case of design \eqref{dcontroller}, model reduction does not affect the range significantly, but depending on the observation setup and the parameter choice, the temperature range is from $-28 \le \theta \le 23$ for the setup \eqref{setup1} with the parameter choice \eqref{choice2} to $-660 \le \theta \le 570$ with the setup \eqref{setup2} and the parameter choice \eqref{choice2}. Temperatures near the extrema are only observed during the first few seconds but the temperature fluctuations remain considerably larger with the observation setup \eqref{setup2} compared to \eqref{setup1} throughout the simulations. The temperature ranges for the low-gain design are $-20 \le \theta \le 19$ for the observation setup \eqref{setup1} and $-100 \le \theta \le 110$ for the observation setup \eqref{setup2}.

\section{CONCLUSION}\label{SCO}
We presented two different controllers for robust output regulation of a room temperature model. Based on numerical simulations, the dual observer-based controller outperforms the simpler low-gain robust controller design in the speed of convergence to the desired reference output. For the chosen initial state, its tracking performance also remained almost unchanged despite significant order reduction applied using balanced truncation. The dual observer-based controller also has the advantage over the low-gain one in that it can be designed for unstable systems as long as the system is exponentially stabilizable and detectable, whereas the low-gain controller requires exponential stability from the system.
In turn, the dual observer-based controller may cause comparably high temperature fluctuations within the system.

\bibliographystyle{plain}
\bibliography{References}

\end{document}